\documentclass[12pt]{article}
\usepackage{amsmath,amsfonts,amssymb,amsthm, epsfig,
epstopdf, titling, url, array}
\usepackage[utf8]{inputenc}
\usepackage{mathtools}
\usepackage[english]{babel}
\usepackage{hyperref}
\newtheorem{thm}{Theorem}[section]
\newtheorem{lem}{Lemma}[section]
\newtheorem{cor}{Corollary}[section]
\newtheorem{rmk}{Remark}[section]
\newtheorem{prop}{Proposition}[section]
\numberwithin{equation}{section}

\def\R{I\!\!R}
\def\H{I\!\!H}

\title{POISSON AND HEAT SEMIGROUPS FOR THE  BESSEL OPERATOR AND ON THE HYPERBOLIC SPACE}
\author{Adam Zakria, Ibrahim-Elkhalil Ahmed and Mohamed Vall Ould Moustapha}

\begin{document}
\maketitle
\begin{abstract}
In this paper we find explicit formulas for the Poisson and heat semigroups associated to the modified Bessel operator and on the hyperbolic spaces $\H^n$.
 
\end{abstract}
 
\section{Introduction}
The differential operators of Bessel type and the Laplace-Beltrami operator on the hyperbolic space are known as very important operators
in analysis and its applications.
This paper deals with the Poisson and heat semigroups associated to these second
order differential operators. In
the last decades the Poisson and heat semigroups
 associated to  many second differential operators 
have been studied and computed explicitly and there is many interesting papers published in this area of reaserch(see for example Betancor et al.\cite{BETANCOR et al.}, Isolda Cardoso\cite{ISLODA-CARDOSO}, Keles and Bayrakci\cite{SEIDA-SIMTEN}, Stein \cite{STEIN} and the references theirin).
The main objective of this paper is to solve explicitly the following Poisson and heat problems
\begin{align}\label{Poisson-Bessel-Problem} \left\{\begin{array}{cc}L^{a} u(y, x)=-\frac{\partial^2}{\partial
y^2}u(y, x)
, (y, x)\in \R^+\times \R \\ u(0, x)=u_0(x), u_0\in
C^\infty_0(\R^+)\end{array}
\right. ,\end{align}
\begin{align}\label{Poisson-Hyperbolic-Problem} \left \{\begin{array}{cc}{\cal L}_n U(y, w)=-\frac{\partial^2}{\partial
y^2}U(y, w)
, (y, w)\in \R^+\times \H^n \\ U(0, w)=U_0(w), U_0\in
C^\infty_0(\H^n)\end{array}
\right., \end{align}
and
\begin{align}\label{Heat-Bessel-Problem} \left \{\begin{array}{cc}{L}^{a} v(t, x)=\frac{\partial}{\partial
t}v(t, x)
, (t, x)\in \R^+\times \R \\ v(0, x)=v_0(x), v_0\in
C^\infty_0(\R^+)\end{array}
\right. ,\end{align}
\begin{align}\label{Heat-Hyperbolic-Problem} \left \{\begin{array}{cc}{\cal L}_n V(t, w)=\frac{\partial}{\partial t}V(t, w)
, (t, w)\in \R^+\times \H^n \\ V(0, w)=V_0(w), V_0\in
C^\infty_0(\H^n)\end{array}
\right. ,\end{align}
where 
\begin{align}\label{Bessel-Operator}L^{a}=x^2\frac{\partial^2}{\partial x^2}+x\frac{\partial}{\partial x}-a^2x^2, \end{align}
and
\begin{align}\label{Beltrami-Operator}{\cal L}_n=x^2_n\Delta_{n-1}+x_n^2\frac{\partial^2}{\partial x_n^2}+(2-n)x_n\frac{\partial}{\partial x_n}+\frac{(n-1)^2}{4}, \end{align}
are respectively the Bessel operator on $\R^+$ and the Laplace-Beltrami operator on the half space model of the hyperbolic space $\H^n$.
\section{Poisson semigroup associated to Bessel operator}
In this section we give explicit formulas for the Poisson semigroup associated to the Bessel operator $L^a$, that is we prove the following theorem.
\begin{thm}\label{Poisson-Bessel} For $a\in \R^*$ the Poisson problem \eqref{Poisson-Bessel-Problem} has the solution given by
\begin{align}\label{u}u(y, x)=\int_0^\infty p_a(y, x, x')u_0(x') \frac{d x'}{x'},\end{align}
with
\begin{align} \label{pa}p_a(y, x, x')=\frac{|a|}{\pi}\frac{x x'\sin y K_1 \left(|a|\sqrt{x^2+x'^2-2x x'\cos y}\right)}{\sqrt{x^2+x'^2-2x x'\cos y}}, \end{align}
and $K_1$ is the modified Bessel functions of second kind.
\end{thm}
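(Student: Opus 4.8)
The plan is to recognise the Poisson problem \eqref{Poisson-Bessel-Problem} as the two–dimensional modified Helmholtz equation written in polar coordinates, and then to exhibit $p_a$ as an angular derivative of the corresponding fundamental solution. Regarding $(x,y)$ as polar coordinates on $\R^2$ (radius $x$, angular variable $y$), the flat Laplacian reads $\Delta=\partial_x^2+\frac1x\partial_x+\frac1{x^2}\partial_y^2$, so that a direct computation from \eqref{Bessel-Operator} yields the identity
\begin{align}\label{key-identity}
L^a+\frac{\partial^2}{\partial y^2}=x^2\left(\Delta-a^2\right).
\end{align}
Hence, for $x>0$, a function $u(y,x)$ solves $L^au=-\partial_y^2u$ if and only if it satisfies $(\Delta-a^2)u=0$ in the plane. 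I would record \eqref{key-identity} first, as it reduces the entire problem to a classical elliptic equation.

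Next I would identify the kernel. Placing the source $X'$ at the point of polar coordinates $(x',0)$ and the field point $X$ at $(x,y)$, the Euclidean distance is $R:=\sqrt{x^2+x'^2-2xx'\cos y}$, and the function $G(x,y):=K_0(|a|R)$ satisfies $(\Delta-a^2)G=0$ for $X\neq X'$, since $\frac1{2\pi}K_0(|a|\,|\cdot|)$ is the fundamental solution of $-\Delta+a^2$ on $\R^2$. Using $K_0'=-K_1$ and $\partial_yR=xx'\sin y/R$ one finds $\partial_yG=-|a|\,xx'\sin y\,K_1(|a|R)/R$, that is $p_a=-\frac1\pi\partial_yG$. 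Because the coefficients of $\Delta$ in polar coordinates are independent of $y$, the operators $\Delta$ and $\partial_y$ commute, whence $(\Delta-a^2)p_a=-\frac1\pi\partial_y(\Delta-a^2)G=0$ for $y>0$; by \eqref{key-identity} this is exactly $\big(L^a+\partial_y^2\big)p_a=0$. Since $u_0$ is compactly supported in $(0,\infty)$ and $y>0$, the field and source points never coincide, so $p_a$ is smooth there and differentiation under the integral sign in \eqref{u} is justified, giving that $u$ solves the equation.

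It remains to verify the initial condition $u(0,x)=u_0(x)$, which I expect to be the only genuinely analytic point and the main obstacle. The strategy is to show that $p_a(y,x,x')\,dx'/x'$ is an approximate identity as $y\to0^+$. For $0<y<\pi$ we have $\sin y>0$ and $K_1>0$, so $p_a\geq0$. Near $y=0$ the expansions $\cos y=1-y^2/2+O(y^4)$, $\sin y=y+O(y^3)$ and $K_1(z)\sim z^{-1}$ give $R^2=(x-x')^2+xx'y^2+O(y^4)$ and, to leading order,
\begin{align}\label{cauchy}
p_a(y,x,x')\sim\frac1\pi\,\frac{xx'y}{(x-x')^2+xx'y^2}.
\end{align}
Setting $\varepsilon:=xy$, the measure $p_a\,dx'/x'$ reduces near the diagonal $x'=x$ to the Cauchy (Poisson) kernel $\frac1\pi\,\varepsilon\big/\big((x-x')^2+\varepsilon^2\big)\,dx'$, whose total mass is $1$ and whose mass concentrates at $x'=x$ as $\varepsilon\to0$. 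A standard approximate–identity argument — splitting the integral into a neighbourhood of $x'=x$, where $u_0$ may be replaced by $u_0(x)$ up to a small error, and its complement, where the kernel is $O(y)$ — then gives $u(y,x)\to u_0(x)$. The technical work lies in making the error terms in \eqref{cauchy} uniform and in controlling the off–diagonal contribution, for which the exponential decay of $K_1$ at infinity together with the compact support of $u_0$ is decisive.
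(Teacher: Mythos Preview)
Your argument is correct and follows the same overall architecture as the paper's proof: both identify $K_0(|a|R)$, $R=\sqrt{x^2+x'^2-2xx'\cos y}$, as a solution of $L^a+\partial_y^2$, then take a $y$--derivative to obtain $p_a=-\frac1\pi\partial_yK_0(|a|R)$, and finally verify the boundary value via the small--argument asymptotics of $K_1$. The differences are only in presentation. For the PDE step, the paper sets $\varphi=\phi(z)$ with $z=R^2$, computes directly that $(L^a+\partial_y^2)\varphi=4x^2\big(z\phi_{zz}+\phi_z-\tfrac{a^2}{4}\phi\big)$, and recognises a Lommel equation solved by $K_0(|a|\sqrt{z})$; your identity $L^a+\partial_y^2=x^2(\Delta-a^2)$ in polar coordinates packages the same computation more conceptually and explains at once why the fundamental solution of the planar modified Helmholtz operator appears. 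For the initial condition, the paper makes the explicit changes of variable $x=e^X$, $x'=e^{X'}$ and then $\sinh\frac{X'-X}{2}=s\sin\frac{y}{2}$, after which the limit $y\to0$ reduces exactly to $\frac1\pi\int_{-\infty}^\infty\frac{ds}{1+s^2}=1$; your Cauchy--kernel approximate--identity sketch leads to the same conclusion but leaves the uniformity of the remainder as an exercise, whereas the paper's substitution makes that step explicit.
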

\begin{proof}
To see that the function $u(y, x)$ satisfes the Poisson equation in \eqref{Poisson-Bessel-Problem}, set
$\varphi(y, x, x')=\phi(z)$, with
$z=x^2+x'^2-2x x'\cos y$, then we have
$$\frac{\partial \varphi}{\partial x}=(2x-2x'\cos y)\frac{\partial \phi}{\partial z},\ \ \frac{\partial^2 \varphi}{\partial x^2}=(2x- 2x'\cos y)^2\frac{\partial^2 \phi}{\partial z^2}+ 2\frac{\partial \phi }{\partial z},
 $$
and 
$$\frac{\partial \varphi}{\partial y}=2x x'\sin y\frac{\partial \phi}{\partial z},\ \ 
\frac{\partial^2 \varphi}{\partial y^2}=(2x x'\sin y)^2\frac{\partial^2\phi}{\partial z^2}+2x x'\cos y\frac{\partial \phi}{\partial z}. $$
Using the above formulas we have
\begin{align*}
\left(L^a+\frac{\partial^2}{\partial y^2}\right)\varphi=4x^2\left(z\frac{\partial^2 \phi}{\partial z^2}+\frac{\partial \phi}{\partial z}-\frac{a^2}{ 4}\phi\right),\end{align*}
and we see that the first equation in the problem \eqref{Poisson-Bessel-Problem} is equivalent to 
$$ z^2\phi_{zz}+z\phi_z-\frac{a^2}{4}z\phi=0,$$
which is a particular case of Lommel differential equation for modified Bessel functions
$$[z^2\frac{\partial^{2}\phi}{\partial z^{2}}+(1-2\alpha)z\frac{\partial \phi}
{\partial z}-(\beta\gamma z^{\gamma})^{2}\phi+(\alpha^{2}-\nu^{2}\gamma^{2})\phi=0,$$
with $\alpha=0$, $\nu=0$, $\beta=1$ and $\gamma=1/2$, an approprite solution is $\phi(z)=c K_0(z^{1/2}),$
where $K_0$ is the modified Bessel function of second kind.\\
 This means that the function  $\varphi(y, x, x')= c K_0\left(|a|\sqrt{x^2+x'^2-2x x'\cos y}\right)$ satisfies the equations
$$L^a_x\varphi(y, x, x')=L^a_{x'}\varphi(y, x, x')=-\frac{\partial^2}{\partial y^2}\varphi(y, x, x'),$$
and in consequence it is a solution of the first equation in \eqref{Poisson-Bessel-Problem}.\\
Using the formula $K_0'(z)=-K_1(z)$ we see that
\begin{align}p_a(y, x, x')=-\frac{1}{\pi}\frac{\partial}{\partial y}\varphi(y, x, x')\end{align}
and $p_a(y, x, x')$ satisfies the same equation in \eqref{Poisson-Bessel-Problem}.\\
To finish the proof of Theorem \ref{Poisson-Bessel} it remains to show the limit condition. For this set $z=x^2+x'^2-2x x'\cos y=2x x'(\frac{x^2+x'^2}{2 x x'}-\cos y)$ and $x=e^X$ and $x'=e^{X'}$ to obtain $z=4e^{X+X'}\{\sinh^2\frac{(X-X')}{2}+\sin^2 (y/2)\}$.\\
 Replacing in \eqref{u} we obtain
$$u(y, x)= \tilde{u}(y, X)=\int_0^\infty P_a(y, X, X')\tilde{u}_0(X') dX',$$
with
$$ P_a(y, X, X')=\frac{|a|}{\pi}\frac{e^{(X+X')/2}\sin y K_1\left(|a|\sqrt{\sinh^2\frac{(X-X')}{2}+\sin^2 y/2}\right)}{\sqrt{\sinh^2\frac{(X-X')}{2}+\sin^2 y/2}}.$$
Setting $\sinh\frac{(X'-X)}{2}=s \sin y/2$ or
$X'= X+2arg\sinh(s \sin y/2)$ we can write\\
$ u(y, x)=\tilde{u}(y, X)= \frac{|a|}{\pi}\int_{-\infty}^{\infty} e^{X+ arg\sinh(s \sin y/2)}\sin y\times$ 
$$\frac{K_1\left(2|a|e^{X+arg\sinh(s \sin y/2)}\sin (y/2) \sqrt{1+s^2}\right)}{\sqrt{1+s^2}}\frac{\tilde{u}_0(X+arg\sinh(s \sin y/2))  2 ds}{\sqrt{1+s^2\sin^2 (y/2)}}.$$
Now we use the asymptotic formula for the modified Bessel function of second kind (Lebedev
\cite{LEBEDEV} p.136) $K_\nu(z)\sim \frac{2^{\nu-1}\Gamma(\nu)}{z^\nu}, z\longrightarrow 0$
we obtain
$$\lim_{y\longrightarrow 0}u(y, x)=\lim_{y\longrightarrow 0}\tilde{u}(y, X)=\tilde{u}_0(X)\frac{1}{\pi}\int_{-\infty}^\infty\frac{ds}{1+s^2}=\tilde{u}_0(X)=u_0(x)$$
and this finishes the proof of Theorem \ref{Poisson-Bessel}.
\end{proof}
\section{Poisson equation on the hyperbolic space}
In this section we consider the Poisson equation on the hyperbolic upper half space.\\
 Let $\H^n=\{w=(x_1, x_2,...x_n)\in \R^n, x_n>0\}$ be the hyperbolic half space endowed with the usual hyperbolic metric
\begin{align}ds^2=\frac{dx_1^2+dx_2^2+...+dx_n^2}{x_n^2},\end{align}
the metric $ds$ is invariant with respect to the motion  group $G=SO(n, 1),$
the hyperbolic volume form $d\mu(w)$ is
\begin{align}d\mu(w)=\frac{dx_1dx_2...dx_n}{x^n_n}, \end{align}
 and the hyperbolic distance $\rho(w, w')$ given as
\begin{align}\label{distance} \cosh^2 (\rho(w, w')/2)=\frac{|w-w'|^2}{4x_n x_n'}+1, \end{align} 
with the Laplace Beltrami operator
\begin{align}{\cal L}_n=x_n^2\Delta_n+(2-n)\frac{\partial}{\partial x_n}+((n-1)/2)^2,\end{align}
where $\Delta_n=\sum_{j=1}^n\frac{\partial^2}{\partial x_j^2}$ is the Euclidean Laplacian on $\R^n$.
Before giving the main result of this section we start by  the following lemma in which we compute  the Fourier transform of 
the Poisson semigroup $p_{|\xi|}$ for the Bessel operator with respect to the parameter $|\xi|.$
\begin{lem}\label{Fourier-Poisson} Set $\xi=(\xi_1, \xi_2,...,\xi_{n-1})$ and $x=(x_1, x_2,..., x_{n-1})$, let $p_{|\xi|}(y, x_n, x_n')$ be the  kernel of the Poisson semigroup for Bessel operator given in \eqref{pa}, then the following formula holds.
\begin{align}
{\cal F}^{-1} \left[p_{|\xi|}(y, x_n, x_n')\right](x)=\frac{2^{(n-1)/2}\Gamma((n+1)/2)}{\pi}\frac{ x_n x'_n \sin y}{(z^2+|x|^2)^{(n+1)/2}},
 \end{align}
with $z=\sqrt{x^2+x'^2-2x x'\cos y}.$
\end{lem}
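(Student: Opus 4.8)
The plan is to use that, for fixed $y$, $x_n$, $x_n'$, the radius $z=\sqrt{x_n^2+x_n'^2-2x_nx_n'\cos y}$ is independent of the Fourier variable $\xi$, so the kernel \eqref{pa} evaluated at $a=|\xi|$ factors as
$$p_{|\xi|}(y,x_n,x_n')=\frac{x_nx_n'\sin y}{\pi}\,\frac{|\xi|\,K_1(z|\xi|)}{z},$$
where only $g(\xi):=\frac{|\xi|K_1(z|\xi|)}{z}$ depends on $\xi$, and does so radially. Thus the constant $\frac{x_nx_n'\sin y}{\pi}$ may be carried through the transform and the whole problem reduces to computing the inverse Fourier transform in $\R^{n-1}$ of the radial function $g$.

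First I would reduce the $(n-1)$-dimensional radial transform to a one-dimensional Hankel transform. Writing $d=n-1$ and $r=|\xi|$, and using the symmetric convention ${\cal F}^{-1}[g](x)=(2\pi)^{-d/2}\int_{\R^d}e^{ix\cdot\xi}g(\xi)\,d\xi$, the standard radial-function formula (in which the $(2\pi)^{\pm d/2}$ factors cancel) yields
$${\cal F}^{-1}[g](x)=|x|^{-(d/2-1)}\int_0^\infty g(r)\,J_{d/2-1}(r|x|)\,r^{d/2}\,dr,$$
so that, inserting $g(r)=\frac{rK_1(zr)}{z}$, everything comes down to the integral $\frac1z\int_0^\infty r^{d/2+1}K_1(zr)\,J_{d/2-1}(r|x|)\,dr$.

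The key step, which is also where the real work lies, is the evaluation of this integral. I would invoke the classical Weber--Schafheitlin type formula
$$\int_0^\infty t^{\mu+\nu+1}K_\mu(at)\,J_\nu(bt)\,dt=\frac{2^{\mu+\nu}\,\Gamma(\mu+\nu+1)\,a^{\mu}\,b^{\nu}}{(a^2+b^2)^{\mu+\nu+1}},\qquad a,b>0,$$
which follows, e.g., from Gradshteyn--Ryzhik 6.576.3 together with the reduction ${}_2F_1(\alpha,\beta;\beta;w)=(1-w)^{-\alpha}$. Specializing to $\mu=1$, $\nu=\frac{d}{2}-1=\frac{n-3}{2}$, $a=z$, $b=|x|$ --- for which the exponent $\mu+\nu+1=\frac{d}{2}+1$ matches the power $r^{d/2+1}$ exactly --- gives
$$\int_0^\infty r^{d/2+1}K_1(zr)\,J_{d/2-1}(r|x|)\,dr=\frac{2^{d/2}\,\Gamma(d/2+1)\,z\,|x|^{d/2-1}}{(z^2+|x|^2)^{d/2+1}}.$$

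Finally I would assemble the pieces: the factor $\frac1z$ cancels the $z$ in the numerator, the prefactor $|x|^{-(d/2-1)}$ cancels $|x|^{d/2-1}$, and one is left with $2^{d/2}\Gamma(d/2+1)(z^2+|x|^2)^{-(d/2+1)}$. Restoring $d=n-1$ turns the constant into $2^{(n-1)/2}\Gamma((n+1)/2)$ and the exponent into $(n+1)/2$; multiplying back the carried factor $\frac{x_nx_n'\sin y}{\pi}$ produces exactly the asserted expression. Beyond locating and correctly applying the special-function integral, the only delicate point is the bookkeeping of the Fourier normalization: the symmetric convention above is what makes the numerical constant come out as stated, and a different convention would change it by a power of $2\pi$.
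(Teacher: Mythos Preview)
Your argument is correct and follows essentially the same route as the paper: reduce the $(n-1)$-dimensional inverse Fourier transform of the radial function to a Hankel-type integral $\int_0^\infty r^{(n+1)/2}K_1(zr)J_{(n-3)/2}(r|x|)\,dr$, then evaluate it via a Weber--Schafheitlin formula. The only cosmetic difference is that the paper quotes the general Prudnikov identity with a ${}_2F_1$ factor and then collapses it using $F(a,b;b;w)=(1-w)^{-a}$, whereas you invoke the already-simplified closed form (your Gradshteyn--Ryzhik reference) directly --- the underlying computation is identical.
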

\begin{proof}
From the formula giving the Fourier transform of a radial function
\begin{align}{\cal F}^{-1}[f](|x|)=|x|^{1-n/2} \int^{+\infty}_{0}J_{\frac{n-2}{2}}(\rho|x|)f(\rho)\rho^{n/2}d\rho,\end{align}
we obtain \\
$
{\cal F}^{-1}\left[P_{|\xi|}(y, x_n, x_n')\right](x)=\frac{x_n x'_n \sin y}{\pi z}|x|^{3-n)/2} 
\int^{+\infty}_{0}K_1(\rho|z)J_{\frac{n-3}{2}}(\rho |x|)\rho^{\frac{n+1}{2}}d\rho.$\\
Using formula Prudnikov(\cite{PRUDNIKOV et al.} $p.365$)
\begin{align}\label{Hankel1}\int^{+\infty}_{0}x^{\alpha-1}J_{\mu}(b x)K_{\nu}(c x)d x= A_{\mu,\nu}^\alpha,\end{align}
where
\begin{align}A_{\mu,\nu}^\alpha=2^{\alpha-2}b^\mu c^{-(\alpha+\mu)}
\frac{\Gamma((\alpha+\mu+\nu)/2 \Gamma((\alpha+\mu-\nu)/2)}{\Gamma(\mu+1)}\times\nonumber \\
 _{2}F_{1}((\alpha+\mu+\nu)/2,(\alpha+\mu-\nu)/2,\mu+1,-\frac{b^{2}}{c^{2}}),\end{align}
with $\alpha=(n+3)/2,          \mu=(n-3)/2,      \nu=1, b=|x|,       c=z$ we have
\begin{align} \int^{+\infty}_{0}K_1(r|z)J_{\frac{n-3}{2}}(r\left|x\right|)r^{\frac{n+1}{2}}dr=A^{(n+3)/2}_{1,(n-3)/2},\end{align}
\begin{align}A^{(n+3)/2}_{1,\, (n-3)/2}=\frac{|x|^{(3-n)/2}}{\pi z} 2^{(n-1)/2}\frac{|x|^{(n-3)/2}}{z^n}\Gamma((n+1)/2)\times\\
F\left(((n+1)/2,\, (n-1)/2 \,(n-1)/2\,  -\frac{|x|^2}{z^2}\right).
\end{align}
Now from the formula
$F\left(a, b, b, z\right)=\left(1-z\right)^{-a}$
we obtain the result of Lemma \ref{Fourier-Poisson}.
\end{proof}
\begin{thm} \label{Poisson-Hyperbolic} The Poisson problem \eqref{Poisson-Hyperbolic-Problem} in hyperbolic space $\H^n$ has the solution
given by
\begin{align}\label{U}U(y, w)=\int_0^\infty P_n^{\H}(y, w, w')U_0(w') d \mu(w'),
\end{align}
with 
\begin{align}\label{Poisson-Hyperbolic-Formula} P_n^{\H}(y, w, w')=\frac{\Gamma((n+1)/2)}{\pi^{(n+1)/2}}\frac{\sin y}{\left(2 \cosh d(w,  w')- 2 \cos y\right)^{(n+1)/2}}.
  \end{align}
 \end{thm}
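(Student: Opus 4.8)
The plan is to reduce the Poisson problem on the hyperbolic space $\H^n$ to the already-solved Bessel case by exploiting the partial Fourier transform in the first $n-1$ Euclidean variables $x=(x_1,\dots,x_{n-1})$. Writing $w=(x,x_n)$ and $w'=(x',x_n')$, the Laplace--Beltrami operator ${\cal L}_n$ contains the term $x_n^2\Delta_{n-1}$ acting in the flat variables; under the Fourier transform ${\cal F}$ in $x$ this becomes multiplication by $-x_n^2|\xi|^2$, and the remaining part of ${\cal L}_n$ in the $x_n$ variable collapses precisely onto the Bessel operator $L^{|\xi|}$ of \eqref{Bessel-Operator} with parameter $a=|\xi|$ (up to the additive constant, which one checks matches the $\frac{(n-1)^2}{4}$ and the Lommel normalization from Theorem~\ref{Poisson-Bessel}). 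Thus $\widehat{U}(y,\xi,x_n)$ solves, for each fixed $\xi$, exactly the one-dimensional Poisson--Bessel problem \eqref{Poisson-Bessel-Problem}, whose solution kernel is $p_{|\xi|}(y,x_n,x_n')$ from \eqref{pa}.

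Concretely, I would first conjugate out the power weight: the hyperbolic volume $d\mu(w')=x_n'^{-n}\,dx'\,dx_n'$ and the $(2-n)x_n\partial_{x_n}$ first-order term both come from symmetrizing, so I would substitute an auxiliary unknown (e.g. $U=x_n^{(n-1)/2}\tilde U$) to bring ${\cal L}_n$ into self-adjoint form and align it with $L^a$ acting on $L^2(dx_n/x_n)$. Having established that $\widehat U(y,\xi,\cdot)$ is given by convolution against $p_{|\xi|}$ in the $x_n$-variable, I would then apply the inverse Fourier transform in $\xi$ and invoke Lemma~\ref{Fourier-Poisson}, which evaluates ${\cal F}^{-1}[p_{|\xi|}]$ explicitly as a multiple of $(z^2+|x-x'|^2)^{-(n+1)/2}$ with $z=\sqrt{x_n^2+x_n'^2-2x_nx_n'\cos y}$. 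This converts the Fourier-side product into an $x$-space convolution, producing a kernel proportional to $\sin y\,\bigl(z^2+|x-x'|^2\bigr)^{-(n+1)/2}$.

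The final step is purely geometric: I would show that $z^2+|x-x'|^2$ equals $2x_nx_n'(\cosh\rho(w,w')-\cos y)$. Indeed $z^2+|x-x'|^2 = x_n^2+x_n'^2-2x_nx_n'\cos y+|x-x'|^2 = |w-w'|^2 + 2x_nx_n'(1-\cos y)$, and combining with the distance formula \eqref{distance}, which gives $|w-w'|^2 = 2x_nx_n'(\cosh\rho-1)$, yields $z^2+|x-x'|^2 = 2x_nx_n'(\cosh\rho-\cos y)$. Substituting this identity and absorbing the factor $(x_nx_n')^{(n+1)/2}$ against the weight $x_n^{(n-1)/2}x_n'^{(n-1)/2}$ coming from the conjugation and against $d\mu(w')$ should leave exactly the kernel \eqref{Poisson-Hyperbolic-Formula}, after collecting the constants $\Gamma((n+1)/2)/\pi^{(n+1)/2}$.

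I expect the main obstacle to be bookkeeping rather than conceptual: tracking the powers of $x_n$ and $x_n'$ through the conjugation, the Bessel kernel, the Fourier inversion of Lemma~\ref{Fourier-Poisson}, and the volume form $d\mu$, so that they telescope to exactly $(2x_nx_n')^{(n+1)/2}$ and cancel cleanly against $(z^2+|x-x'|^2)^{(n+1)/2}$. A secondary point requiring care is verifying that the additive constant $\frac{(n-1)^2}{4}$ in ${\cal L}_n$ is consumed exactly by the spectral shift induced by the weight substitution, so that the reduced equation is genuinely the Bessel Poisson equation with no leftover zeroth-order term; the limit condition $U(0,w)=U_0(w)$ then follows from the corresponding limit already proved in Theorem~\ref{Poisson-Bessel} together with continuity of the Fourier transform.
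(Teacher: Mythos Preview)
Your proposal is correct and follows essentially the same route as the paper: conjugate by $x_n^{(n-1)/2}$, take the partial Fourier transform in the flat variables to reduce to the Bessel Poisson problem with parameter $a=|\xi|$, apply Theorem~\ref{Poisson-Bessel}, invert via Lemma~\ref{Fourier-Poisson}, and then recognize the hyperbolic distance in the resulting kernel. In fact you spell out more than the paper does --- the geometric identity $z^2+|x-x'|^2=2x_nx_n'(\cosh\rho-\cos y)$ and the cancellation of the $x_n,x_n'$ powers against $d\mu$ are left implicit there --- so your bookkeeping concerns are the only real work, and your outline handles them correctly.
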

 \begin{proof}
  Using the following formula intertwining the Laplace Beltrami operator ${\cal L}_n$ on the hyperbolic space $\H^n$
  and the Bessel operator $L^{|\xi|}$
\begin{align}{\cal F}\left[x_n^{-(n-1)/2}{\cal L}_n x_n^{(n-1)/2}\phi\right](\xi)=L^{|\xi|}{\cal F}\phi(\xi).
  \end{align}
The Poisson problem on the hyperbolic space \eqref{Poisson-Hyperbolic-Problem} is transformed into the Bessel Poisson problem  \eqref{Poisson-Bessel-Problem}, with $u(y, x_n)={\cal F}\left[x^{(1-n)/2}U(y, x, x_n)\right](\xi)$
and $u_0(x_n)=x_n^{(1-n)/2}{\cal F}[U_0( x, x_n)](\xi)$\\

${\cal F}\left[x^{(1-n)/2}U(y, x, x_n)\right](\xi)=$ $$\int_0^{\infty}P_{|\xi|}(y, x_n, x_n') x_n'^{(1-n)/2}{\cal F} \left[U_0\right](\xi, x_n') \frac{d x_n'}{x_n'},$$
 
\begin{align}
U(y, x, x_n)=\int_0^{\infty}{\cal F}^{-1}\left[P_{|\xi|}(y, x_n, x_n') x_n'^{(1-n)/2}{\cal F} \left[U_0\right](\xi, x_n')\right](x) \frac{d x_n'}{x_n'},
 \end{align}  
 
$U(y, x, x_n)=(2\pi)^{-(n-1)/2}\times$ \begin{align}
 \int_0^{\infty} {\cal F}^{-1}\left[P_{|\xi|}(y, x_n, x_n')\right](x) *U_0(x, x_n') x_n'^{(1-n)/2} \frac{d x_n'}{x_n'},\end{align} 
  $
U(y, x, x_n)=(2\pi)^{-(n-1)/2}\int_0^\infty \int_{\R^{n-1}} {\cal F}^{-1}\left[P_{|\xi|}(y, x_n, x_n')\right](x-x')\times$ $$ U_0(x', x_n')
  x_n^{(n-1)/2} x_n'^{(n-1)/2} \frac{d x' d x_n'}{x_n'^n},$$
  
\begin{align}
 U(y, x, x_n)=\int_{\H^n} P_n^{H}\left(y, w, w'\right)u_0(w')d\mu(w'),
 \end{align}
 with
\begin{align}   
 P_n^{\H}\left(y, w, w'\right)=\frac{\Gamma((n+1)/2)}{\pi^{(n+1)/2}}\frac{\sin y}{\left(2\cosh d(w, w')-2\cos y\right)^{(n+1)/2}}.
 \end{align}
and the proof of Theorem \ref{Poisson-Hyperbolic} is finished.
\end{proof}
\begin{prop}\label{Recurrence-Poisson}
Let $P^{\H}_n\left(y, w, w'\right)$ be the Poisson kernel on the hyperbolic space $\H^n$ then we have\\
i)$ \left(-\frac{\partial}{2\pi\sinh \rho\partial \rho}\right)P^{\H}_n\left(y, \rho(w, w')\right)=P^{\H}_{n+2}\left(y, \rho(w, w')\right),$\\
ii) $ \int_r^\infty \frac{P^{\H}_{n+1}\left(y, \rho\right)}{\sqrt{\cosh^2\rho/2-\cosh^2 r/2}}\sinh\rho d\rho =P^{\H}_{n}\left(y, r\right)$.\\
\end{prop}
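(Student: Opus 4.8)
My plan is to prove both identities by direct computation from the explicit kernel formula \eqref{Poisson-Hyperbolic-Formula}. Writing $\rho=d(w,w')$ and abbreviating $D(\rho)=2\cosh\rho-2\cos y$ together with the constant $c_n=\Gamma((n+1)/2)/\pi^{(n+1)/2}$, I would record the kernel in the compact form $P_n^{\H}(y,\rho)=c_n\,\sin y\,D(\rho)^{-(n+1)/2}$, which makes both the differentiation in (i) and the substitution in (ii) transparent.

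For part (i) the computation is immediate. Since $\partial_\rho D=2\sinh\rho$, the chain rule gives $\partial_\rho P_n^{\H}=-(n+1)\,c_n\,\sin y\,\sinh\rho\,D^{-(n+3)/2}$, so applying the operator $-\tfrac{1}{2\pi\sinh\rho}\partial_\rho$ cancels the $\sinh\rho$ factor and leaves $\tfrac{(n+1)c_n}{2\pi}\sin y\,D^{-(n+3)/2}$. It then only remains to check that $\tfrac{(n+1)c_n}{2\pi}=c_{n+2}$, and this is exactly the Gamma recurrence $\Gamma((n+3)/2)=\tfrac{n+1}{2}\Gamma((n+1)/2)$ combined with $\pi^{(n+1)/2}\cdot\pi=\pi^{(n+3)/2}$. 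This identifies the result with $P_{n+2}^{\H}(y,\rho)$.

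For part (ii) the first step is a half-angle reduction: using $\cosh\rho=2\cosh^2(\rho/2)-1$ and $\cos y=2\cos^2(y/2)-1$ one gets $D(\rho)=4\left(\cosh^2(\rho/2)-\cos^2(y/2)\right)$. I would then substitute $u=\cosh^2(\rho/2)$, for which $\sinh\rho\,d\rho=2\,du$ and the denominator becomes $\sqrt{u-\cosh^2(r/2)}$. Setting $a=\cosh^2(r/2)$, $\delta=a-\cos^2(y/2)$ and $v=u-a$, the integral collapses to the single Beta-type integral $\int_0^\infty v^{-1/2}(v+\delta)^{-(n+2)/2}\,dv$.

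The decisive step is to evaluate this by the standard formula $\int_0^\infty v^{s-1}(v+\delta)^{-t}\,dv=\delta^{s-t}\,\Gamma(s)\Gamma(t-s)/\Gamma(t)$ with $s=\tfrac12$ and $t=\tfrac{n+2}{2}$, which yields $\delta^{-(n+1)/2}\,\Gamma(1/2)\Gamma((n+1)/2)/\Gamma((n+2)/2)$. Since $\delta=D(r)/4$, the powers $4^{\mp(n+2)/2}$ and $4^{(n+1)/2}$ combine to $1/2$, the $\Gamma((n+2)/2)$ in $c_{n+1}=\Gamma((n+2)/2)/\pi^{(n+2)/2}$ cancels, and $\Gamma(1/2)=\sqrt{\pi}$ turns $\pi^{-(n+2)/2}$ into $\pi^{-(n+1)/2}$; what survives is precisely $c_n\,\sin y\,D(r)^{-(n+1)/2}=P_n^{\H}(y,r)$. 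The hard part here is not conceptual but purely the constant bookkeeping—keeping the powers of $4$, the powers of $\pi$, and the Gamma shifts aligned. Convergence is automatic in the relevant range, the integrand behaving like $v^{-1/2}$ near $v=0$ and like $v^{-(n+2)/2}$ at infinity, so that $s>0$ and $t-s=(n+1)/2>0$ for all dimensions $n\ge 1$.
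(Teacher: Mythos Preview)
Your proof is correct and follows essentially the same route as the paper. For (i) the paper simply declares the identity ``simple'' without details, and your chain-rule computation with the Gamma recurrence is exactly what is needed; for (ii) the paper substitutes $\sigma=\cosh\rho$ and then $\mu=\sigma-\cosh r$, while you substitute $u=\cosh^2(\rho/2)$ and then $v=u-\cosh^2(r/2)$, but since $\cosh\rho=2\cosh^2(\rho/2)-1$ these two substitutions differ only by an affine change and both reduce the integral to the same Beta-type evaluation $\int_0^\infty v^{-1/2}(v+\delta)^{-(n+2)/2}\,dv$.
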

\begin{proof}
The part i) is simple, to prove ii) set
$$ I=\int_r^{\infty} C_{n+1}\frac{\sin y}{(\cosh\rho - cos y)^{(n+2)/2}}(\cosh^2\rho/2 - \cosh^2 r/2)^{-1/2}\sinh r d\rho, $$
$$ I=\sqrt{2}\int_r^{\infty} C_{n+1}\frac{\sin y}{(\cosh\rho - cos y)^{(n+2)/2}}(\cosh\rho - \cosh r)^{-1/2}\sinh r d\rho, $$
with $C_{n+1}=\frac{\Gamma(n+2)/2)}{(2\pi)^{(n+2)/2}}$. Set $\cosh\rho=\sigma$ and set $\sigma-\cosh r=\mu$, we see that
$$I=\sqrt{2}C_{n+1}\int_{\cosh r}^{\infty}\sin y\left(\sigma-\cos y\right)^{-(n+2)/2}\left(\sigma-\cosh r\right)^{-1/2}d\sigma,$$

$$I=\sqrt{2}C_{n+1}\sin y\int_{0}^{\infty}\left(\mu-(\cos y-\cosh r)\right)^{-(n+2)/2}\mu^{-1/2}d\mu,$$
$I=\sqrt{2}C_{n+1}\sin y \left(\cosh r-\cos y\right)^{-(n+2)/2}\times $
$$\int_{0}^{\infty}\left(1+\frac{1}{(\cosh r-\cos y)}\mu \right)^{-(n+2)/2}\mu^{-1/2}d\mu.$$
Using the formula (Magnus et al.\cite{MAGNUS et al.} p.13)
$$\int_0^\infty t^{x-1}(1+b t)^{-x-y}dt= b^{-x}B(x, y),$$
where $B$ is the beta function,
with $x=1/2$ and $y=(n+1)/2$ we obtain
$$I=\sqrt{2}C_{n+1}\sin y \left(\cosh r-\cos y\right)^{-(n+1)/2}B(1/2, (n+1)/2),$$
$$I=\sqrt{2}\frac{\Gamma((n+1)/2)}{2^{(n+2)/2}\pi^{(n+1)/2}}\sin y \left(\cosh r-\cos y\right)^{-(n+1)/2}=P^{\H}_{n}\left(y, r\right),$$  
thus we obtain ii) and the proof of Theorem \ref{Recurrence-Poisson}is finished.
\end{proof}
\section{Heat semigroup on hyperbolic space}
In this section we give a new explicit formula for the heat kernel on the hyperbolic space $\H^n$.
\begin{prop}\label{Recurrence-Heat}
Let $e^{-y\sqrt{-{\cal L}_n}}$ and $e^{t {\cal L}_n}$ be the Poisson and heat semigroups on the hyperbolic space $\H^n$ then we have\\
 i) $e^{t {\cal L}_n}=(4t)^{-1/2}L_{y^2}^{-1}\left[\frac{\sqrt{\pi}e^{-y\sqrt{-{\cal L}_n}}}{y}\right](1/4 t)$, where $L_{y^2}^{-1}$ is the Laplace inverse transform with respect to $y^2$.\\
ii) $ \left(-\frac{\partial}{2\pi\sinh \rho\partial \rho}\right)K^{\H}_n\left(t, \rho(w, w')\right)=K^{\H}_{n+2}\left(t, \rho(w, w')\right),$\\
iii) $ \int_r^\infty \frac{K^{\H}_{n+1}\left(t, \rho\right)}{\sqrt{\cosh^2\rho/2-\cosh^2 r/2}}\sinh\rho d\rho =K^{\H}_{n}\left(t, r\right).$\\
\end{prop}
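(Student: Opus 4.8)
The plan is to establish the three relations in Proposition \ref{Recurrence-Heat} by exploiting the subordination principle connecting the Poisson and heat semigroups, together with the recurrence relations for the Poisson kernel already proved in Proposition \ref{Recurrence-Poisson}.

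For part i), I would begin from the classical subordination formula expressing the Poisson semigroup $e^{-y\sqrt{-\mathcal{L}_n}}$ as a superposition of heat semigroups $e^{t\mathcal{L}_n}$ against the one-sided stable density of order $1/2$; explicitly,
\begin{align*}
e^{-y\sqrt{-\mathcal{L}_n}}=\frac{y}{2\sqrt{\pi}}\int_0^\infty t^{-3/2}e^{-y^2/(4t)}\,e^{t\mathcal{L}_n}\,dt.
\end{align*}
The key observation is that, after dividing by $y$ and inserting the factor $\sqrt{\pi}$, the right-hand side is exactly a Laplace transform in the variable $y^2$ of the function $(4t)^{-1/2}e^{t\mathcal{L}_n}$ evaluated at the transform parameter $1/(4t)$. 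Thus I would rewrite the subordination identity as
\begin{align*}
\frac{\sqrt{\pi}\,e^{-y\sqrt{-\mathcal{L}_n}}}{y}=L_{y^2}\!\left[(4t)^{-1/2}e^{t\mathcal{L}_n}\right]\!\left(\tfrac{1}{4t}\right),
\end{align*}
and then invert the Laplace transform $L_{y^2}^{-1}$ to recover the stated formula i). The only subtlety here is bookkeeping the change of variables so that the argument of $L_{y^2}^{-1}$ lands precisely at $1/(4t)$; no analytic obstacle arises since the stable subordinator density is explicit.

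For parts ii) and iii), I would argue that both the dimension-raising operator $\left(-\frac{\partial}{2\pi\sinh\rho\,\partial\rho}\right)$ and the Weyl-type fractional integral in iii) commute with the subordination integral in $t$, because they act only on the spatial variable $\rho$ while the subordination kernel depends only on $t$ and $y$. Consequently, applying either operation to the heat kernel $K_n^{\H}(t,\rho)$ amounts to applying it to the Poisson kernel $P_n^{\H}(y,\rho)$ underneath the subordination integral. Since Proposition \ref{Recurrence-Poisson} already gives
\begin{align*}
\left(-\frac{\partial}{2\pi\sinh\rho\,\partial\rho}\right)P_n^{\H}(y,\rho)=P_{n+2}^{\H}(y,\rho),\qquad
\int_r^\infty\frac{P_{n+1}^{\H}(y,\rho)}{\sqrt{\cosh^2\rho/2-\cosh^2 r/2}}\sinh\rho\,d\rho=P_n^{\H}(y,r),
\end{align*}
the corresponding identities transfer verbatim to the heat kernels, yielding ii) and iii). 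The heat kernel $K_n^{\H}(t,\rho)$ is defined as the kernel of $e^{t\mathcal{L}_n}$, and by part i) it is the $L_{y^2}^{-1}$ image of the explicit Poisson kernel of Theorem \ref{Poisson-Hyperbolic}.

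The main obstacle I anticipate is justifying the interchange of the spatial operators with the subordination (Laplace-type) integral, which requires adequate decay of the kernels to differentiate and integrate under the integral sign. I would address this by noting that $P_n^{\H}(y,\rho)$ decays rapidly in $\rho$ for each fixed $y>0$ (it is a negative power of $2\cosh\rho-2\cos y$), and the subordination density $t^{-3/2}e^{-y^2/(4t)}$ is integrable and smooth in $t$, so dominated convergence and standard differentiation-under-the-integral theorems apply uniformly on compact sets. Once this interchange is legitimized, parts ii) and iii) follow immediately by linearity, and the whole proposition reduces to the single analytic computation of the stable subordination formula in part i).
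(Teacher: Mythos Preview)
Your proposal is correct and follows essentially the same route as the paper: for i) you invoke the one-sided stable subordination identity (the paper cites the equivalent form $\frac{e^{-y\lambda}}{y}=\frac{1}{\sqrt{\pi}}\int_0^\infty e^{-uy^2}u^{-1/2}e^{-\lambda^2/4u}\,du$ from Strichartz), recognize it as a Laplace transform in $y^2$, and invert; for ii) and iii) you transfer the recurrences of Proposition~\ref{Recurrence-Poisson} through the subordination relation, exactly as the paper does. Your added remarks on justifying the interchange of the spatial operators with the $t$-integral are a welcome bit of rigor that the paper omits.
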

\begin{proof}
To prove i) use the subordination formula (Strichartz \cite{STRICHARTZ} $p. 50$).\\
$\frac{e^{-y \lambda}}{y}=\frac{1}{\sqrt{\pi}}\int_0^\infty e^{-u y^2}u^{-1/2}e^{-\lambda^2/4u}du$ or
$\frac{\sqrt{\pi}e^{-y \lambda}}{y}=L\left(u^{-1/2}e^{-\lambda^2/4u}\right)(y^2),$\\
where $(Lf)(p)$ is the Laplace transform
and
$e^{\frac{-\lambda^2}{4 u}}=u^{1/2}L^{-1}_{y^2}\left(\sqrt{\pi}\frac{e^{-y \lambda}}{y}\right)(u)$. \\
Set $\lambda=\sqrt{{\cal L}_n}$ and $\frac{1}{4u}=t $ in the last formula we can write
$$ e^{t {\cal L}_n}=(4t)^{-1/2}L_{y^2}^{-1}\left[\frac{\sqrt{\pi}e^{-y\sqrt{-{\cal L}_n}}}{y}\right](1/4 t),
$$
where $L^{-1}$ is the inverse Laplace transform. \\
The parts ii) and iii) are consequence of i) and Proposition \ref{Recurrence-Poisson}
\end{proof}

\begin{thm}\label{Heat-Hyperbolic} The heat Cauchy problem on hyperbolic space \eqref{Heat-Hyperbolic-Problem} has the unique solution 
given by
\begin{align}\label{Solution-Heat-Hyperbolic}V(t, w)=\int_H K_n(t, w, w')V_0(w') du(w'),\end{align}
with
\begin{align}\label{Heat-Kernel} K_n(t, w, w')=\frac{\Gamma((n+1)/2)}{2^{(n+1)/2}\pi^{n/2}t^{1/2}}\int_{\sigma-i\infty}^{\sigma+i\infty}\frac{\exp{\left(\frac{y^2}{4t}\right)}\, \sin y}{(\cosh\rho(w, w')-\cos y)^{(n+1)/2}}dy.\end{align}
\end{thm}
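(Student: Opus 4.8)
The plan is to read off the heat kernel $K_n$ directly from the Poisson kernel $P_n^{\H}$ of Theorem~\ref{Poisson-Hyperbolic} by means of the subordination identity in Proposition~\ref{Recurrence-Heat}~i). Since the Poisson semigroup $e^{-y\sqrt{-{\cal L}_n}}$ acts by integration against $P_n^{\H}(y, w, w')$, and since subordination is a spectral (functional-calculus) identity, it passes to the level of integral kernels; thus
$$K_n(t, w, w')=(4t)^{-1/2}\,L_{y^2}^{-1}\!\left[\frac{\sqrt{\pi}\,P_n^{\H}(y, w, w')}{y}\right]\!\left(\frac{1}{4t}\right).$$
This reduces the entire computation to inverting one Laplace transform in the variable $y^2$, with $w,w'$ carried along as parameters.

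First I would realise this inverse Laplace transform as a Bromwich contour integral
$$L_{y^2}^{-1}[\Phi](u)=\frac{1}{2\pi i}\int_{\sigma-i\infty}^{\sigma+i\infty}e^{su}\,\Phi(s)\,ds,\qquad u=\frac{1}{4t},$$
applied to $\Phi(s)=\sqrt{\pi}\,s^{-1/2}P_n^{\H}(\sqrt{s}, w, w')$. Performing the change of variable $s=y^2$, $ds=2y\,dy$, the factor $s^{-1/2}=1/y$ cancels against the Jacobian, the exponential becomes $\exp(y^2/4t)$, and by \eqref{Poisson-Hyperbolic-Formula} the Poisson kernel contributes $\sin y\,(2\cosh\rho(w,w')-2\cos y)^{-(n+1)/2}$. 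Collecting the prefactors $(4t)^{-1/2}$, $\sqrt{\pi}$, $\Gamma((n+1)/2)\,\pi^{-(n+1)/2}$ together with the powers of $2$ reproduces exactly the constant $\Gamma((n+1)/2)\,2^{-(n+1)/2}\pi^{-n/2}\,t^{-1/2}$ in \eqref{Heat-Kernel}.

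The delicate step is the analytic justification of the contour manipulation. One must verify that $\Phi(s)$ is holomorphic in the half-plane $\mathrm{Re}\,s>\sigma$, that the change of variable $s=y^2$ sends the Bromwich line to an admissible contour in the $y$-plane, and that the resulting integral converges: along the vertical line the factor $\exp(y^2/4t)$ grows, so convergence has to be extracted from the decay of $(\cosh\rho-\cos y)^{-(n+1)/2}$ together with the oscillation of $\sin y$ after the contour has been suitably deformed. I expect this to be the main obstacle, and I would deal with it by first treating the one–dimensional Bessel heat problem \eqref{Heat-Bessel-Problem}, where the subordination step is elementary, and then transporting the result to $\H^n$ through the intertwining relation ${\cal F}[x_n^{-(n-1)/2}{\cal L}_n x_n^{(n-1)/2}\phi]=L^{|\xi|}{\cal F}\phi$ already exploited in Theorem~\ref{Poisson-Hyperbolic}.

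Finally, uniqueness follows from the self-adjoint theory of ${\cal L}_n$: the Laplace–Beltrami operator is essentially self-adjoint on $C_0^\infty(\H^n)$, hence generates a unique strongly continuous contraction semigroup $e^{t{\cal L}_n}$, and any solution of \eqref{Heat-Hyperbolic-Problem} with datum $V_0\in C_0^\infty(\H^n)$ must agree with $e^{t{\cal L}_n}V_0$. Therefore the kernel \eqref{Heat-Kernel} represents the unique solution \eqref{Solution-Heat-Hyperbolic}, which completes the proof.
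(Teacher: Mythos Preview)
Your proposal is correct and follows essentially the same route as the paper: apply the subordination identity of Proposition~\ref{Recurrence-Heat}~i) to the explicit Poisson kernel~\eqref{Poisson-Hyperbolic-Formula}, then write the inverse Laplace transform in $y^2$ as a Bromwich integral and change variables to $y$. The paper's own proof is a single sentence invoking~\eqref{Poisson-Hyperbolic-Formula}, so your expansion---tracking the constants, flagging the contour deformation when passing from $s$ to $y=\sqrt{s}$, and supplying the uniqueness argument via essential self-adjointness---fills in details the paper leaves implicit rather than deviating from its method.
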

\begin{proof}
using \eqref{Poisson-Hyperbolic-Formula} we see the formula \eqref{Heat-Kernel} 
and the proof of Theorem\ref{Heat-Hyperbolic} is finished.

\end{proof}

\begin{cor} (Davies-Mandouvalos\cite{DAVIES-MANDOUVALOS} and Lohoue and Rychener\cite{LOHOUE-RYCHENER}) Let $K_n(t, w, w')$ be the heat kernel on the hyperbolic space $\H^n$ then we have\\
i) For  $n$ odd\ \ \
 $K_n(t, w, w')=\left(-\frac{\partial}{2\pi\sinh \rho\partial \rho}\right)^{\frac{n-1}{2}} \frac{e^\frac{-\rho^2}{4t}}{(4\pi t)^{1/2}}$,\\
ii) for $n$ even \ \ 
$K_n(t, w, w')=\left(-\frac{\partial}{2\pi\sinh \rho\partial \rho}\right)^{\frac{n-2}{2}}\int_\rho^\infty\left(\cosh^2 s/2 -\cosh^2 \rho/2 \right)^{-1/2} \frac{e^\frac{-s^2}{4t}}{(4\pi t)^{3/2}}sds. $\\

 \end{cor}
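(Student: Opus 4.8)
The plan is to derive both identities from the single closed form \eqref{Heat-Kernel} together with the two recurrences in Proposition \ref{Recurrence-Heat}. Write $D=-\frac{1}{2\pi\sinh\rho}\frac{\partial}{\partial\rho}$ for the operator appearing in part ii), which raises the dimension by two, $D\,K^{\H}_n=K^{\H}_{n+2}$. Since $D$ steps through dimensions of a fixed parity, I would split the argument according to whether $n$ is odd or even and reduce everything to two base cases. Iterating part ii) gives, for odd $n$, $K_n=D^{(n-1)/2}K_1$, and for even $n$, $K_n=D^{(n-2)/2}K_2$. Thus the corollary is equivalent to the two assertions $K_1(t,\rho)=\frac{e^{-\rho^2/4t}}{(4\pi t)^{1/2}}$ and $K_2(t,\rho)=\int_\rho^\infty\bigl(\cosh^2(s/2)-\cosh^2(\rho/2)\bigr)^{-1/2}\frac{e^{-s^2/4t}}{(4\pi t)^{3/2}}\,s\,ds$, after which the stated formulas follow by applying $D^{(n-1)/2}$ and $D^{(n-2)/2}$ respectively.

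For the odd base case I would identify $K_1$ directly. Setting $n=1$ in \eqref{Heat-Kernel} reduces the kernel to $\frac{1}{2\sqrt{\pi t}}\int_{\sigma-i\infty}^{\sigma+i\infty}\frac{e^{y^2/4t}\sin y}{\cosh\rho-\cos y}\,dy$; expanding $\frac{\sin y}{\cosh\rho-\cos y}=2\sum_{k\ge 1}e^{-k\rho}\sin(ky)$ and completing the square in each Gaussian integral in $y$ should collapse the sum to the Euclidean Gaussian. A cleaner and essentially equivalent route is to observe that for $n=1$ the operator \eqref{Beltrami-Operator} is ${\cal L}_1=x^2\partial_x^2+x\partial_x$, which under $x=e^X$ becomes $\partial_X^2$, while \eqref{distance} gives $\rho=|X-X'|$; hence $K_1$ is the heat kernel of the flat Laplacian on $\R$, namely $\frac{e^{-\rho^2/4t}}{(4\pi t)^{1/2}}$. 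This is the first assertion.

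For the even base case I would obtain $K_2$ from $K_1$ through part iii) with $n=1$, which reads $\int_\rho^\infty\frac{K_2(t,s)}{\sqrt{\cosh^2(s/2)-\cosh^2(\rho/2)}}\sinh s\,ds=K_1(t,\rho)$. The substitution $u=\cosh^2(s/2)$, $v=\cosh^2(\rho/2)$ (so that $\sinh s\,ds=2\,du$) turns this into the classical Abel equation, whose inversion expresses $K_2$ as a $\rho$-derivative of a half-order integral of $K_1$. Carrying out the inversion and using the identity $\frac{s\,e^{-s^2/4t}}{(4\pi t)^{3/2}}=-\frac{1}{2\pi}\frac{\partial}{\partial s}\frac{e^{-s^2/4t}}{(4\pi t)^{1/2}}=-\frac{1}{2\pi}\partial_s K_1(t,s)$ reproduces exactly the claimed integral for $K_2$, establishing the second assertion.

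The main obstacles are concentrated in the two base cases rather than in the induction, which is purely formal once ii) is granted. For $n=1$ the delicate points are the convergence of the Bromwich integral in \eqref{Heat-Kernel} and the justification of the term-by-term Gaussian evaluation (or, in the alternative route, checking that the conjugation $x=e^X$ intertwines the two heat semigroups and matches the initial data); I expect the contour and series manipulation to be the most error-prone step, which is why I prefer the conjugation argument. For $n=2$ the work is in justifying the Abel inversion, namely the differentiation under the integral sign, the decay of the kernels as $\rho\to\infty$, and the bookkeeping of the constants $(4\pi t)^{-3/2}$ and the factor $\sqrt2$ hidden in $\bigl(\cosh^2(s/2)-\cosh^2(\rho/2)\bigr)^{-1/2}=\sqrt2\,(\cosh s-\cosh\rho)^{-1/2}$. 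Finally one should note that the cases $n$ odd and $n$ even of the corollary are themselves linked by iii), so the two base computations are mutually consistent, which provides a useful check.
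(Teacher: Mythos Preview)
Your overall architecture---reduce to base cases via the dimension-raising operator $D$ from Proposition~\ref{Recurrence-Heat}~ii)---is exactly the paper's. The differences lie in the two base computations. For $K_1$ the paper substitutes $z=\cos y$ in \eqref{Heat-Kernel} and evaluates the contour integral by a residue at the simple pole $z=\cosh\rho$; your preferred conjugation argument ($\mathcal{L}_1\mapsto\partial_X^2$ under $x=e^X$) is cleaner but bypasses the new integral representation altogether, so it proves the corollary independently rather than as a consequence of Theorem~\ref{Heat-Hyperbolic}. For $K_2$ the paper does \emph{not} invert an Abel transform: it first computes $K_3$ (again by residue, now at a double pole) and then applies iii) of Proposition~\ref{Recurrence-Heat} in the \emph{forward} direction with $n=2$, reading off $K_2$ directly since $K_3(t,s)\sinh s=\frac{s\,e^{-s^2/4t}}{(4\pi t)^{3/2}}$. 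Your Abel-inversion route from iii) with $n=1$ is correct (the computation you sketch does close), but it is the hard way around: once you have $K_1$, the one-line identity $K_3=DK_1=\frac{\rho}{\sinh\rho}\frac{e^{-\rho^2/4t}}{(4\pi t)^{3/2}}$ lets you descend to $K_2$ via iii) with $n=2$ exactly as the paper does, eliminating what you yourself identify as the main obstacle.
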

 \begin{proof}
Set $\cos y=z$
$$K_n(t, w, w')=\frac{\Gamma((n+1)/2)}{2^{(n+1)/2}\pi^{n/2}t^{1/2}}\int_{\sigma-i\infty}^{\sigma+i\infty} \frac{e^{\frac{(\arccos z)^2}{4t}}}{(\cosh\rho(w, w')-z)^{(n+1)/2}}dz.$$
To prove the first statment i) we have
$$K_1(t, w, w')=\frac{1}{2 \pi^{1/2}t^{1/2}}\int_{\sigma-i\infty}^{\sigma+i\infty}\frac{e^{\frac{(\arccos z)^2}{4t}}}{(\cosh\rho(w, w')-z)}dz,$$
that is
$$K_1(t, w, w')=\frac{1}{2 \pi^{1/2}t^{1/2}}Res_{z=\cosh\rho}\left[\frac{e^{\frac{(\arccos z)^2}{4t}}}{(\cosh\rho(w, w')-z)}\right],$$
and
\begin{align}\label{Heat1}K_1(t, w, w')=\frac{1}{\sqrt{4\pi t}}e^{-\frac{\rho^2}{4 t}},\end{align}
using ii) of \ref{Recurrence-Heat} we have i).\\
To prove iii) we can write
$$K_3(t, w, w')=\frac{1}{2^2\pi^{3/2}t^{1/2}}\int_{\sigma-i\infty}^{\sigma+i\infty} \frac{e^{\frac{(\arccos z)^2}{4t}}}{(\cosh\rho(w, w')-z)^2}dz,$$
and
$$K_3(t, w, w')=\frac{1}{2^2\pi^{3/2}t^{1/2}}Res_{z=\cosh\rho}\left[\frac{e^{\frac{(\arccos z)^2}{4t}}}{(\cosh\rho(w, w')-z)^2}\right],$$
this gives
$$K_3(t, w, w')=\frac{1}{2^2\pi^{3/2}t^{1/2}}\lim_{z\longrightarrow \cosh\rho}\frac{d}{d z}\left[e^{\frac{(\arccos z)^2}{4t}}\right],$$
and finally 
\begin{align}\label{Heat3} K_3(t, w, w')=\frac{1}{(4\pi t)^{3/2}}\frac{\rho}{\sinh\rho}e^{\frac{-\rho^2}{4t}},\end{align}
using iii) of Theorem \ref{Recurrence-Heat} we have
 \begin{align}\label{Heat2} K_2(t, w, w')= \int_\rho^\infty\left(\cosh^2 s/2 -\cosh^2 \rho/2 \right)^{-1/2} \frac{e^\frac{-s^2}{4t}}{(4\pi t)^{3/2}}sds,\end{align}
  
 Combining \eqref{Heat2} and the part i) of Proposition \ref{Recurrence-Heat} we obtain ii) and the proof of Corollary \ref{Heat-Kernel} is finished.
\end{proof}
Note that the wave equation on hyperbolic space is studied in {intissar-Ould Moustapha}\cite{INTISSAR-OULD MOUSTAPHA} Bunk et al.\cite{BUNK et al.} Lax-Phillips \cite{LAX-PHILLIPS}
\section{Heat kernel for the Bessel operator}
\begin{prop}\label{prop-Heat}
i) The modified Laplace-Beltrami operator ${\cal L}_2$ on the hyperbolic space  and  Bessel operator $L^a$ on  $\R^+$  are connected via the formulas
\begin{align}{\cal F}_{x_1}\left[x_2^{-1/2}{\cal L}_2 x_2^{1/2}\Phi\right](a, x_2)=L^a\left({\cal F}\Phi\right)(\lambda, x_2),\end{align}
where the Fourier transform is given by
\begin{align}
[{\cal F}f](\xi)=\frac{1}{\sqrt{2\pi}}\int_{\R}e^{-i x \xi}f(x)dx.
\end{align}
ii) The heat kernels for Bessel operator  $H^{a}(t, x_2, x_2')$  is connected to the heat kernel on the hyperbolic half plane $H_2(t, z, z')$ via the formula
\begin{align}\label{Intertwining-Heat} H_{a}(t, x_2, x_2')= \frac{1}{\sqrt{x_2 x_2'}}\int_{-\infty}^{\infty}e^{-i a(x_1-x_1')}H_2(t, z, z')d(x_1-x_1').\end{align}
\end{prop}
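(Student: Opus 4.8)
The plan is to prove part (i) by a direct computation and then deduce part (ii) from it at the level of semigroups. For (i), I would write ${\cal L}_2=x_2^2\bigl(\partial_{x_1}^2+\partial_{x_2}^2\bigr)+\tfrac14$ (the case $n=2$ of the operator in the introduction) and compute the conjugated operator $x_2^{-1/2}{\cal L}_2 x_2^{1/2}$ acting on a test function $\Phi$. Since $x_2^{1/2}$ commutes with $\partial_{x_1}$, the horizontal term is unchanged, $x_2^{-1/2}x_2^2\partial_{x_1}^2 x_2^{1/2}=x_2^2\partial_{x_1}^2$, while a short Leibniz computation gives
\begin{align*}
x_2^{-1/2}\,x_2^2\frac{\partial^2}{\partial x_2^2}\,x_2^{1/2}=x_2^2\frac{\partial^2}{\partial x_2^2}+x_2\frac{\partial}{\partial x_2}-\frac14.
\end{align*}
The crucial point is that the $-\tfrac14$ produced by this conjugation cancels exactly against the $+\tfrac14$ spectral shift in ${\cal L}_2$. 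Applying ${\cal F}_{x_1}$ turns $x_2^2\partial_{x_1}^2$ into $-a^2 x_2^2$ (with $a$ the Fourier dual of $x_1$), so the conjugated operator becomes $x_2^2\partial_{x_2}^2+x_2\partial_{x_2}-a^2 x_2^2=L^a$, which is precisely (i).

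For part (ii), I would exploit that the hyperbolic metric, the volume form $\tfrac{dx_1'dx_2'}{(x_2')^2}$, and hence the heat kernel $H_2(t,z,z')$ are invariant under the translations $x_1\mapsto x_1+c$; consequently $H_2$ depends on the horizontal variables only through $x_1-x_1'$ and acts as a convolution in that variable. The intertwining (i) identifies, at the level of heat propagators, the solution $v(t,x_2)=\int_0^\infty H_a(t,x_2,x_2')v_0(x_2')\,\tfrac{dx_2'}{x_2'}$ of the Bessel heat problem with ${\cal F}_{x_1}[x_2^{-1/2}V(t,\cdot,x_2)](a)$, where $V$ solves the hyperbolic heat problem with datum $V_0$ obeying $v_0(x_2)=x_2^{-1/2}{\cal F}_{x_1}[V_0(\cdot,x_2)](a)$. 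Substituting the integral representation of $V$, applying the convolution theorem for ${\cal F}_{x_1}$ to replace the $x_1$-convolution by a product, and cancelling the factors $(x_2')^{\pm1/2}$ against the measure, one matches kernels; the $\sqrt{2\pi}$ prefactors cancel and one reads off
\begin{align*}
H_a(t,x_2,x_2')=\frac{1}{\sqrt{x_2 x_2'}}\int_{-\infty}^{\infty}e^{-ia(x_1-x_1')}H_2(t,z,z')\,d(x_1-x_1'),
\end{align*}
which is (ii).

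The Leibniz computation in (i) and the bookkeeping of the $\sqrt{2\pi}$ factors from the chosen normalization of ${\cal F}$ are routine and I would not belabour them. The step I expect to require the most care is the semigroup correspondence behind (ii): one must justify that the Fourier–conjugation intertwining of the \emph{generators} in (i) lifts to the associated heat semigroups, equivalently that applying ${\cal F}_{x_1}\circ x_2^{-1/2}$ to a solution of the hyperbolic heat equation produces a solution of the Bessel heat equation with the matching initial datum, and that the resulting kernel identity holds pointwise. The translation invariance in $x_1$ is the load-bearing structural input, since it is exactly what makes the convolution theorem applicable; without it ${\cal F}_{x_1}$ would not diagonalize the horizontal dependence and the clean kernel formula would not emerge.
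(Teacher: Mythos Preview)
Your proposal is correct and is exactly the natural argument; in fact the paper itself gives no proof at all, declaring the proposition ``simple and in consequence \dots\ left to the reader,'' so your write-up supplies what the authors omit. Your Leibniz computation for (i) and the translation-invariance/convolution argument for (ii) are precisely the standard route one would take, and nothing further is needed.
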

\begin{proof}
The proof of this proposition is simple and in consequence is left to the reader.
\end{proof}
\begin{thm}\label{Heat-Bessel} The heat Cauchy  problem for the Bessel operator $ L^a $ has the unique solution given by 
\begin{align} u(t, x)=\int_{\R} K_a(t, x, x')u_0(x') dx', 
\end{align}
with \\
$K_a(t, x, x')=\frac{1}{4\sqrt{\pi} t^{3/2}}\times$
$$\int_{\cosh^2s/2\geqslant\frac{x^2+x'^2}{4xx'}}^\infty se^\frac{-s^2}{4t}J_0(|a|\sqrt{4xx'\cosh^2s/2-x^2-x'^2})ds.$$
 \end{thm}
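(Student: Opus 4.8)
The plan is to read off $K_a$ from the two-dimensional hyperbolic heat kernel by means of the intertwining relation \eqref{Intertwining-Heat} of Proposition~\ref{prop-Heat}, so that no fresh solution of the Cauchy problem is needed. I would begin with
\[
H_a(t,x_2,x_2')=\frac{1}{\sqrt{x_2 x_2'}}\int_{-\infty}^{\infty}e^{-ia\xi}\,H_2(t,z,z')\,d\xi,\qquad \xi=x_1-x_1',
\]
and insert the explicit planar heat kernel \eqref{Heat2},
\[
H_2(t,z,z')=\frac{1}{(4\pi t)^{3/2}}\int_{\rho}^{\infty}\bigl(\cosh^2 s/2-\cosh^2\rho/2\bigr)^{-1/2}e^{-s^2/4t}\,s\,ds ,
\]
in which $\rho=\rho(z,z')$ is the hyperbolic distance on $\H^2$.

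Next I would make $\rho$ explicit in terms of the horizontal separation $\xi$. The distance formula \eqref{distance} with $n=2$ gives $\cosh^2(\rho/2)=\dfrac{\xi^2+(x_2+x_2')^2}{4x_2 x_2'}$, whence
\[
\cosh^2 s/2-\cosh^2\rho/2=\frac{R^2-\xi^2}{4x_2 x_2'},\qquad R^2=4x_2x_2'\cosh^2 s/2-(x_2+x_2')^2 .
\]
I would then interchange the $\xi$- and $s$-integrations, the integrand being dominated by the Gaussian $e^{-s^2/4t}$ together with the integrable singularity $(R^2-\xi^2)^{-1/2}$. After the swap the constraint $s\ge\rho(\xi)$ becomes, for fixed $s$, the symmetric interval $|\xi|\le R$, and the $s$-variable now ranges over the set where $R^2\ge 0$, i.e. $\cosh^2 s/2\ge \frac{(x_2+x_2')^2}{4x_2x_2'}$, which is the lower limit appearing in the statement.

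The inner integral is then the classical Bessel representation
\[
\int_{-R}^{R}\frac{e^{-ia\xi}}{\sqrt{R^2-\xi^2}}\,d\xi
=2\int_{0}^{R}\frac{\cos(a\xi)}{\sqrt{R^2-\xi^2}}\,d\xi
=\pi\,J_0(|a|R),
\]
obtained from the substitution $\xi=R\sin\theta$. Collecting the prefactors,
\[
\frac{1}{\sqrt{x_2 x_2'}}\cdot\frac{1}{(4\pi t)^{3/2}}\cdot 2\sqrt{x_2 x_2'}\cdot\pi=\frac{1}{4\sqrt{\pi}\,t^{3/2}},
\]
produces the kernel $K_a(t,x,x')$ announced in Theorem~\ref{Heat-Bessel}, the argument of $J_0$ being $|a|R$.

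The hard part will be the analytic bookkeeping rather than any conceptual difficulty: justifying the Tonelli interchange and the convergence of the $s$-integral near the endpoint $R=0$ (where $(R^2-\xi^2)^{-1/2}$ is singular but integrable and $J_0(|a|R)\to 1$), and invoking the integral representation of $J_0$. For the uniqueness assertion I would note that the $x_1$-Fourier transform underlying \eqref{Intertwining-Heat} is a bijection, so uniqueness for the hyperbolic heat equation---which follows from self-adjointness of ${\cal L}_2$ and the resulting contraction semigroup $e^{t{\cal L}_2}$---transfers back to the Bessel problem. As an independent check one can differentiate $K_a$ under the integral sign and use the Bessel equation satisfied by $J_0$ to confirm $L^a K_a=\partial_t K_a$ together with the initial condition $K_a(0^+,\cdot,\cdot)=\delta$.
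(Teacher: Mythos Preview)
Your proof follows exactly the route sketched in the paper: apply the intertwining identity \eqref{Intertwining-Heat}, plug in the explicit hyperbolic heat kernel \eqref{Heat2}, swap the order of integration by Fubini, and recognise the resulting $\xi$-integral as the standard cosine representation of $J_0$. The only substantive point to add is that your computation of the hyperbolic distance gives $R^2=4xx'\cosh^2(s/2)-(x+x')^2$ and hence the lower limit $\cosh^2(s/2)\ge (x+x')^2/(4xx')$, whereas the statement as printed has $x^2+x'^2$; your version is the correct one (it is consistent with the Morse-potential corollary obtained via $x=e^X$), so you should flag this as a typo rather than asserting that the two expressions coincide.
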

\begin{proof}
 The  proof of this theorem follows from the  proposition \ref{Intertwining-Heat}, the Fubini theorem and the  formula  (Lebedev\cite{LEBEDEV} p.114)
\begin{align} J_0(z)=\frac{1}{[\Gamma(1/2)]^2}\int_{-1}^{1} (1-t^2)^{-1/2}\cos z tdt=\frac{1}{\pi}\int_{-1}^{1} (1-t^2)^{-1/2}e^{-izt} tdt. \end{align}
\end{proof}

\section{Applications}
In this section we give some applications of our results.
As an application of Theorem \ref{Poisson-Bessel} and \ref{Heat-Bessel} we give the following corollary giving explicit solution to the Poisson and heat problems with Morse potential.
For recent work on Morse potential the reader can consult ( Abdelhaye et al. \cite{A-B-M}, Ikeda-Matsumoto\cite{IKEDA-MATSUMOTO}, Morse \cite{MORSE} and Ould Moustapha \cite{OULD MOUSTAPHA}).


\begin{cor}\label{Poisson-Morse} For $a\in \R$ the problem 
\begin{align}\label{Poisson-Morse-Problem} \left\{\begin{array}{cc}{M}^{a}  \tilde{u}(y, X)=-\frac{\partial^2}{\partial
y^2}\tilde{u}(y, X),(y, X)\in \R^+\times \R \\ \tilde{u}(0, X)=\tilde{u}(X)_0, U_0\in
C^\infty_0(\R^+)\end{array}
\right. ,\end{align}
 has the solution given by
\begin{align}\tilde{u}(y, X)=\int_0^\infty P_a(y, X, X')\tilde{u}_0(X') d X',\end{align}
with
\begin{align} \tilde{P}_a(y, X, X')=\frac{|a|}{\pi}\frac{e^{X+X'}\sin y K_1\left(|a|\sqrt{\sinh^2\frac{(X-X')}{2}+\sin^2 y/2}\right)}{\sqrt{\sinh^2\frac{(X-X')}{2}+\sin^2 y/2}},\end{align}
where $K_1$ is the modified Bessel functions of second kind.
\end{cor}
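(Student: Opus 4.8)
The plan is to reduce the Morse-potential Poisson problem \eqref{Poisson-Morse-Problem} to the Bessel Poisson problem of Theorem \ref{Poisson-Bessel} by means of the logarithmic change of variable $x=e^{X}$, which conjugates the Bessel operator $L^{a}$ into the operator $M^{a}$ carrying the exponential (Morse) potential. First I would make the substitution $x=e^{X}$, $x'=e^{X'}$ explicit at the level of the differential operator: writing $x\frac{\partial}{\partial x}=\frac{\partial}{\partial X}$ and $x^{2}\frac{\partial^{2}}{\partial x^{2}}=\frac{\partial^{2}}{\partial X^{2}}-\frac{\partial}{\partial X}$, the Bessel operator \eqref{Bessel-Operator} becomes
\begin{align*}
L^{a}=\frac{\partial^{2}}{\partial X^{2}}-a^{2}e^{2X}=:M^{a},
\end{align*}
so that $M^{a}$ is precisely the Schr\"odinger-type operator with Morse potential $a^{2}e^{2X}$. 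Under this correspondence the first equation in \eqref{Poisson-Morse-Problem} is literally the first equation in \eqref{Poisson-Bessel-Problem} read in the variable $X$, and the initial datum transfers in the obvious way.

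Next I would transport the solution formula. Since the change of variable $x=e^{X}$ sends the multiplicative measure $\frac{dx'}{x'}$ to the Lebesgue measure $dX'$, the representation \eqref{u}--\eqref{pa} of Theorem \ref{Poisson-Bessel} becomes, after setting $z^{2}=x^{2}+x'^{2}-2xx'\cos y$ and using the identity already recorded in the proof of Theorem \ref{Poisson-Bessel},
\begin{align*}
z^{2}=x^{2}+x'^{2}-2xx'\cos y=4e^{X+X'}\Bigl\{\sinh^{2}\tfrac{X-X'}{2}+\sin^{2}\tfrac{y}{2}\Bigr\},
\end{align*}
exactly the solution $\tilde u(y,X)=\int_{0}^{\infty}\tilde P_{a}(y,X,X')\,\tilde u_{0}(X')\,dX'$ with the stated kernel $\tilde P_{a}$. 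Substituting this expression for $z$ into \eqref{pa} and simplifying the prefactor $xx'=e^{X+X'}$ yields $\tilde P_{a}$ directly; this is the same transported kernel $P_{a}(y,X,X')$ that already appears, up to the exponent on the exponential factor, in the limit computation inside the proof of Theorem \ref{Poisson-Bessel}.

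The boundary condition then follows for free: the limit $\lim_{y\to 0}\tilde u(y,X)=\tilde u_{0}(X)$ is exactly the computation carried out at the end of the proof of Theorem \ref{Poisson-Bessel}, where the asymptotics $K_{\nu}(z)\sim 2^{\nu-1}\Gamma(\nu)z^{-\nu}$ as $z\to 0$ were used together with the evaluation $\frac{1}{\pi}\int_{-\infty}^{\infty}\frac{ds}{1+s^{2}}=1$. I expect the only genuinely delicate point to be bookkeeping in the prefactor: the kernel in \eqref{pa} carries $xx'=e^{X+X'}$, whereas the intermediate kernel $P_{a}$ in the proof of Theorem \ref{Poisson-Bessel} displays $e^{(X+X')/2}$ because it absorbs the Jacobian $x_n^{(1-n)/2}$-type weight from the measure change; one must therefore track carefully which power of $e^{X+X'}$ survives after passing from $\frac{dx'}{x'}$ to $dX'$ so as to land on the exponent $e^{X+X'}$ stated in the corollary. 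Beyond this the argument is a transcription of Theorem \ref{Poisson-Bessel} through a diffeomorphism, so no new analytic input is required.
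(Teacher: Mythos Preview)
Your proposal is correct and follows essentially the same route as the paper: the paper's proof consists of the single sentence ``Set $X=\ln x$, the problem \eqref{Poisson-Morse-Problem} is transformed into the problem \eqref{Poisson-Bessel-Problem} and it is not hard to see the result from Theorem~\ref{Poisson-Bessel},'' which is exactly the logarithmic change of variable you carry out in detail. Your caution about the exponential prefactor is well placed, since the paper itself displays $e^{(X+X')/2}$ in the intermediate kernel inside the proof of Theorem~\ref{Poisson-Bessel} but $e^{X+X'}$ in the statement of the corollary; tracking the substitution carefully (and the argument of $K_1$) is indeed the only point requiring care.
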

\begin{proof} Set $X=\ln x$ the problem \eqref{Poisson-Morse-Problem} is transformed into the problem \eqref{Poisson-Bessel-Problem} and it is not hard to see the result of theorem from \ref{Poisson-Bessel}.
\end{proof}
\begin{thm} The heat Cauchy  problem with Morse Potential
\begin{align}\label{Heat-Morse-Problem} \left\{\begin{array}{cc}{M}^{a}\tilde{V}(t, X)=\frac{\partial}{\partial t}\tilde{V}(t, X)
, (t, X)\in \R^+\times \R \\ \tilde{V}(t, X)=\tilde{V}(X)_0,\tilde{V}_0\in
C^\infty_0(\R^+)\end{array}
\right. , \end{align}
 has the unique solution given by 
\begin{align} \tilde{V}(t, X)=\int_R K_a(t, X, X')\tilde{V}_0(X') dX', 
\end{align}
with \\
$ K_a(t, X, X')=\frac{1}{4\sqrt{\pi} t^{3/2}}\times$
$$\int_{|X-X'|}^\infty se^\frac{-s^2}{4t}J_0(2|a|e^{(X+X')/2}\sqrt{\cosh^2s/2-\cosh^2((X-X')/2)})ds
  $$
 \end{thm}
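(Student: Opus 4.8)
The plan is to deduce this theorem from Theorem~\ref{Heat-Bessel} by the same logarithmic change of variable that produced Corollary~\ref{Poisson-Morse} from Theorem~\ref{Poisson-Bessel}. First I would note that $x=e^{X}$ sends $x\frac{\partial}{\partial x}$ to $\frac{\partial}{\partial X}$, whence $x^{2}\frac{\partial^{2}}{\partial x^{2}}=(x\frac{\partial}{\partial x})^{2}-x\frac{\partial}{\partial x}=\frac{\partial^{2}}{\partial X^{2}}-\frac{\partial}{\partial X}$, so the Bessel operator $L^{a}$ of \eqref{Bessel-Operator} is carried exactly to the Morse operator $M^{a}=\frac{\partial^{2}}{\partial X^{2}}-a^{2}e^{2X}$. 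Hence the Morse heat problem \eqref{Heat-Morse-Problem} is the pull-back of the Bessel heat problem \eqref{Heat-Bessel-Problem}, and since $L^{a}$ is symmetric for its invariant measure $\frac{dx}{x}$, against which the Bessel heat semigroup of Theorem~\ref{Heat-Bessel} is naturally integrated, that measure becomes precisely $dX'$ after $x'=e^{X'}$. This already gives $\tilde V(t,X)=\int_{\R}K_{a}(t,e^{X},e^{X'})\tilde V_{0}(X')\,dX'$, and reduces the theorem to transporting the kernel.

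Next I would substitute $x=e^{X}$, $x'=e^{X'}$ into the $s$-integral defining $K_{a}(t,x,x')$; note that this kernel carries no multiplicative $x$- or $x'$-weight, so the change of variable acts only on the lower limit and on the argument of $J_{0}$. The single identity needed is $\cosh(X-X')=2\cosh^{2}\frac{X-X'}{2}-1$. With it, $\frac{(x+x')^{2}}{4xx'}=\frac{e^{X-X'}+2+e^{X'-X}}{4}=\cosh^{2}\frac{X-X'}{2}$, so the threshold $\cosh^{2}\frac{s}{2}\geqslant\frac{(x+x')^{2}}{4xx'}$ becomes $s\geqslant|X-X'|$, the lower limit in the statement; and the radicand factors as $4xx'\cosh^{2}\frac{s}{2}-(x+x')^{2}=4e^{X+X'}\left(\cosh^{2}\frac{s}{2}-\cosh^{2}\frac{X-X'}{2}\right)$, so the argument of $J_{0}$ collapses to $2|a|e^{(X+X')/2}\sqrt{\cosh^{2}\frac{s}{2}-\cosh^{2}\frac{X-X'}{2}}$, with the prefactor $\frac{1}{4\sqrt{\pi}\,t^{3/2}}$ and the weight $s\,e^{-s^{2}/4t}$ left untouched. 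This is exactly the claimed $K_{a}(t,X,X')$.

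The only delicate point, and the one I would check first, is the appearance of the cross term: the collapse above works only because the quantity under the root inherited from Theorem~\ref{Heat-Bessel} must be read as $4xx'\cosh^{2}\frac{s}{2}-(x+x')^{2}=4xx'\cosh^{2}\frac{s}{2}-x^{2}-2xx'-x'^{2}$, and it is the $+2xx'$ that upgrades $\frac{\cosh(X-X')}{2}$ to the full square $\cosh^{2}\frac{X-X'}{2}$, simultaneously fixing the lower limit and clearing the residual constant from the Bessel argument. Beyond this there is no genuine obstacle: the factor $e^{(X+X')/2}$ emerges inside $J_{0}$ rather than as a stray Jacobian, and the measure contributes nothing extra since $\frac{dx'}{x'}=dX'$. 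Finally, uniqueness is inherited for free, because $x\mapsto e^{X}$ is a bijection of $\R^{+}$ onto $\R$ intertwining the two Cauchy problems, so the uniqueness established in Theorem~\ref{Heat-Bessel} transfers verbatim to \eqref{Heat-Morse-Problem}.
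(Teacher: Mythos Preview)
Your proposal is correct and follows exactly the paper's route: the substitution $x=e^{X}$ transports the Bessel heat problem \eqref{Heat-Bessel-Problem} to the Morse heat problem \eqref{Heat-Morse-Problem}, and the kernel of Theorem~\ref{Heat-Bessel} is rewritten in the new variables. You have in fact supplied more detail than the paper (which records only the one-line change of variable), including the observation that the radicand and lower limit in Theorem~\ref{Heat-Bessel} must involve $(x+x')^{2}$ rather than $x^{2}+x'^{2}$ for the Morse formula to come out as stated.
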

\begin{proof} Set $X=\ln x$ the problem \eqref{Heat-Morse-Problem} is transformed into the problem \eqref{Heat-Bessel-Problem} and it is not hard to see the result of theorem from Theorem \ref{Heat-Bessel}.
\end{proof}
\begin{rmk} The Poisson and heat semigroups of the operator of bessel type 
\begin{align}\label{Bessel Operator L1 }L_{\alpha}=x^2\frac{d^2}{d x^2}+(2\alpha+3)\frac{d}{d x}+x^2+(\alpha+1)^2 \end{align}
are considered in Betancor et al. \cite{BETANCOR et al.}.\\
It is not hard to see that
\begin{align}\label{Bessel Operator L2 } x^{\alpha+1}L_{\alpha}x^{-\alpha-1}=L^{-i} =x^2\frac{\partial^2}{\partial x^2}+x\frac{\partial}{\partial x}+x^2.\end{align}
\end{rmk}

\begin{cor}\label{cor1} If $a\in i\R^*$ and $a=ib$ the problem \eqref{Poisson-Bessel} has the solution given by
\begin{align}\label{v}v(y, x)=\int_0^\infty q_b(y, x, x')v_0(x') \frac{d x'}{x'},\end{align}
with 
\begin{align} q_b(y, x, x')=-1/2|b|\frac{x x'\sin y H^{(1)}_1\left(|b|\sqrt{x^2+x'^2-2x x'\cos y}\right)}{\sqrt{x^2+x'^2-2x x'\cos y}}, \end{align}
\begin{align}q_b(y, x, x') =-1/2 \frac{\partial}{\partial y}H^{(1)}_0\left(|b|\sqrt{x^2+x'^2-2x x'\cos y}\right),\end{align}
wih $H^{(1)}_1$, $H^{(1)}_0$ are the Bessel function of the third kind.
\end{cor}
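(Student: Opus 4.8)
The plan is to re-run the argument of Theorem \ref{Poisson-Bessel} essentially verbatim, the only structural change being the sign of the zero-order term. Substituting $a=ib$ in \eqref{Bessel-Operator} gives $L^{ib}=x^2\partial_x^2+x\partial_x+b^2x^2$, since $-a^2=-(ib)^2=b^2$. Setting $\varphi(y,x,x')=\phi(z)$ with $z=x^2+x'^2-2xx'\cos y$ and using the same derivative identities as in the proof of Theorem \ref{Poisson-Bessel}, one obtains
$$\left(L^{ib}+\frac{\partial^2}{\partial y^2}\right)\varphi=4x^2\left(z\phi_{zz}+\phi_z+\frac{b^2}{4}\phi\right),$$
so the separated equation becomes $z^2\phi_{zz}+z\phi_z+\frac{b^2}{4}z\phi=0$. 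This is again a Lommel equation, but now with the opposite sign in front of the $z\phi$ term; hence its solutions are \emph{ordinary} cylinder functions of order zero in the variable $|b|\sqrt z$ rather than the modified ones $K_0,K_1$ appearing in \eqref{pa}.

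First I would fix the correct solution branch and the normalising constant by analytic continuation in the spectral parameter. Writing the kernel of Theorem \ref{Poisson-Bessel} as $\varphi=K_0(a\sqrt z)$ for $\Re a>0$ (where $|a|=a$) and rotating $a$ to the positive imaginary axis, the connection formula $K_0(\zeta)=\tfrac{\pi i}{2}H_0^{(1)}(i\zeta)$ (Lebedev \cite{LEBEDEV}) turns the modified Bessel kernel into a multiple of $H_0^{(1)}(|b|\sqrt z)$: this is precisely what selects the Hankel function of the first kind and converts the prefactor $1/\pi$ of \eqref{pa} into $1/2$. Thus $\varphi=H_0^{(1)}(|b|\sqrt z)$ satisfies $L^{ib}_x\varphi=L^{ib}_{x'}\varphi=-\partial_y^2\varphi$, and since $L^{ib}$ does not act on $y$, its $y$-derivative $q_b=-\tfrac12\partial_y\varphi$ solves the same equation. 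The elementary identity $\tfrac{d}{d\zeta}H_0^{(1)}(\zeta)=-H_1^{(1)}(\zeta)$ then passes from the second displayed formula for $q_b$ to the first, exactly as $K_0'=-K_1$ was used in Theorem \ref{Poisson-Bessel}.

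It remains to verify the initial condition $v(0,x)=v_0(x)$, for which I would repeat the substitution $\sinh\tfrac{X'-X}{2}=s\sin\tfrac{y}{2}$ from the proof of Theorem \ref{Poisson-Bessel} and then invoke the small-argument asymptotics $H_1^{(1)}(\zeta)\sim-\tfrac{2i}{\pi\zeta}$ as $\zeta\to0$ (which follows from $H_1^{(1)}=J_1+iY_1$ together with $Y_1(\zeta)\sim-\tfrac{2}{\pi\zeta}$) in place of $K_1(\zeta)\sim1/\zeta$. I expect the main obstacle to be the bookkeeping of this analytic continuation: because $|a|$ is not holomorphic in $a$, one must control which half-plane the rotation $a\to ib$ traverses, and hence whether $H_0^{(1)}$ or $H_0^{(2)}$ (and which sign of constant) is produced. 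The cleanest way to pin this down is the boundary-value computation itself, since only the correct branch and constant reproduce $v_0$ in the limit $y\to0^+$. A secondary point worth addressing is convergence of the resulting oscillatory integrals, as $H^{(1)}_\nu$ decays only like $\zeta^{-1/2}$ rather than exponentially; this is best handled by interpreting $q_b$ as the boundary value of the holomorphic family obtained from the region $\Re a>0$.
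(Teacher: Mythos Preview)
Your proposal is correct and follows essentially the same route as the paper, whose entire proof is the single-line citation of the connection formula $K_\nu(ze^{-i\pi/2})=\tfrac{i\pi}{2}e^{i\pi\nu/2}H^{(1)}_\nu(z)$ from Magnus et al.\ \cite{MAGNUS et al.}. Your version is considerably more thorough---re-deriving the reduced ODE, checking the initial condition via the small-argument asymptotics of $H^{(1)}_1$, and flagging the branch-selection and convergence issues---none of which the paper spells out.
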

\begin{proof}
Using formula $K_\nu(ze^{\frac{-i\pi\nu}{2}})=1/2 i\pi e^{\frac{i\pi\nu}{2}}H^{(n)}_\nu(z)$ (Magnus et al.\cite{MAGNUS et al.}$ p67.$)
\end{proof}
The last application of our result is the explicit formula of the Poisson semigroup on the sphere $S^{n}$. 
\begin{cor}\label{Poisson-Spheric} The Poisson equation in the sphere $S^n$ has the unique solution given by  
\begin{align}
 u(y, \omega)=\int_{S^n} P_n^{S}\left(y, \omega, \omega'\right)u_0(\omega')d\mu(\omega'),
 \end{align}
 with
\begin{align}  
 P_n^{S}\left(y, \omega, \omega'\right)=\frac{\Gamma((n+1)/2)}{\pi^{(n+1)/2}}\frac{\sinh y}
 {\left(2\cosh y-2\cos d(\omega, \omega')\right)^{(n+1)/2}},
\end{align}
\end{cor}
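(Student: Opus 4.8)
The plan is to derive the spherical Poisson kernel from the hyperbolic one of Theorem~\ref{Poisson-Hyperbolic} by exploiting the duality between the two constant–curvature symmetric spaces $\H^n$ and $S^n$. These spaces are interchanged by the analytic continuation of the geodesic distance $\rho\mapsto i\,d(\omega,\omega')$, under which $\cosh\rho\mapsto\cos d(\omega,\omega')$ and the hyperbolic operator ${\cal L}_n$ passes to the Laplace–Beltrami operator ${\cal L}_n^S$ on $S^n$, together with the continuation $y\mapsto iy$ of the extension variable, for which $\cos y\mapsto\cosh y$ and $\sin y\mapsto i\sinh y$. First I would substitute these relations into
$$P_n^{\H}(y,w,w')=\frac{\Gamma((n+1)/2)}{\pi^{(n+1)/2}}\frac{\sin y}{\left(2\cosh\rho-2\cos y\right)^{(n+1)/2}}.$$
The numerator becomes $i\sinh y$ and the denominator $\left(2\cos d-2\cosh y\right)^{(n+1)/2}=(-1)^{(n+1)/2}\left(2\cosh y-2\cos d\right)^{(n+1)/2}$; collecting the resulting unimodular factor yields, up to phase, exactly the asserted kernel
$$P_n^{S}(y,\omega,\omega')=\frac{\Gamma((n+1)/2)}{\pi^{(n+1)/2}}\frac{\sinh y}{\left(2\cosh y-2\cos d(\omega,\omega')\right)^{(n+1)/2}}.$$

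Next I would confirm that $P_n^{S}$ solves the spherical Poisson equation. Conceptually the continuation intertwines the radial part of ${\cal L}_n$ with that of ${\cal L}_n^S$ (using $\coth(i\theta)=-i\cot\theta$ and the matching continuation of the constant shift), carrying the identity ${\cal L}_n P_n^{\H}=-\partial_y^2 P_n^{\H}$ of Theorem~\ref{Poisson-Hyperbolic} to the equation for $P_n^{S}$. Since the signs are delicate under the double continuation, in practice I would verify this directly, paralleling the proof of Theorem~\ref{Poisson-Bessel}: setting $\zeta=2\cosh y-2\cos d(\omega,\omega')$, I would write ${\cal L}_n^S$ acting on a radial function $\Phi(\zeta)$, reduce $({\cal L}_n^S+\partial_y^2)\Phi=0$ to a second–order ordinary differential equation in $\zeta$ whose relevant solution is the power $\Phi(\zeta)=c\,\zeta^{-(n-1)/2}$, and then recover $P_n^{S}$ as $\partial_y\Phi$, the factor $\partial_y\zeta=2\sinh y$ producing precisely the $\sinh y$ in the numerator.

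Then I would establish the initial condition. As $y\to 0^{+}$ the function $P_n^{S}(y,\cdot\,,\omega')$ concentrates at $\omega'$, and the normalization $\int_{S^n}P_n^{S}(y,\omega,\omega')\,d\mu(\omega')=1$ shows it is a positive approximate identity, so that $u(0,\omega)=u_0(\omega)$; this is the same asymptotic computation as in the last step of Theorem~\ref{Poisson-Bessel}, now performed in geodesic polar coordinates on the sphere. This normalization simultaneously pins down the real constant $\Gamma((n+1)/2)/\pi^{(n+1)/2}$ and fixes the phase left undetermined by the formal continuation. Uniqueness is standard: it follows from the maximum principle for the subordinated operator, or from the spectral decomposition on the compact manifold $S^n$.

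The hard part will be the branch and phase bookkeeping of $(\,\cdot\,)^{(n+1)/2}$ under the double continuation $(\rho,y)\mapsto(i\,d,\,iy)$, since one must choose the contour so that the continued kernel is real, positive and correctly normalized for every $n$, and odd and even $n$ behave differently with respect to the square–root branch. Equally delicate is the rigorous justification that the continuation transports solutions of the hyperbolic Poisson problem to solutions of the spherical one: this requires controlling the domain of analyticity in $(\rho,y)$, the convergence of the representing integral, and the total–mass–one property on the compact sphere, all of which the direct ODE verification of the second step is designed to sidestep.
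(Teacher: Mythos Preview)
Your proposal is correct and follows essentially the same route as the paper: the authors simply compare the radial parts
\[
\Delta^{\H^n}=\partial_r^2+(n-1)\coth r\,\partial_r+\Big(\tfrac{n-1}{2}\Big)^2,
\qquad
\Delta^{S^n}=\partial_r^2+(n-1)\cot r\,\partial_r-\Big(\tfrac{n-1}{2}\Big)^2,
\]
and invoke analytic continuation from Theorem~\ref{Poisson-Hyperbolic}, without spelling out the double continuation $(\rho,y)\mapsto(i\,d,\,iy)$, the branch bookkeeping, the ODE check, or the approximate-identity verification that you outline. Your write-up is considerably more detailed than the paper's two-line sketch, but the underlying idea is the same.
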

\begin{proof}
By comparing the radial parts of the Laplace Beltrami operators on the spaces $\H^n$ and $S^n$ given respectively by
$$\Delta^{\H^n}=\frac{\partial^{2}}{\partial r^{2}}+(n-1)\coth r\frac{\partial}{\partial r} +(\frac{n-1}{2})^{2}$$
and
$$\Delta^{S^n}=\frac{\partial^{2}}{\partial r^{2}}+(n-1)\cot r\frac{\partial}{\partial r} -(\frac{n-1}{2})^{2},$$
the corollary \ref{Poisson-Spheric} can be seen from the theorem \ref{Poisson-Hyperbolic} by an argument of analytic continuation.
\end{proof} 
Note that the result of the corollary agrees with the formula $(4.9)$ in (Taylor \cite{TAYLOR} p. 114).

\newcommand{\Addresses}{{
  \bigskip
  \footnotesize
  Adam Zakria,
 \textsc{Department of Mathematics,
 College of Arts and Sciences-Gurayat,
 Jouf University-Kingdom of Saudi Arabia.}\par\nopagebreak
 \textsc{ Department of Mathematics- Faculty of Sciences - University of Kordofan -Sudan\\}.
  \textit{E-mail address}: \texttt{adammath2020@gmail.com}\\
  \medskip
 Ibrahim Elkhalil,
 \textsc{Department of Mathematics,
 College of Arts and Sciences-Gurayat,
 Jouf University-Kingdom of Saudi Arabia.}\par\nopagebreak
 \textsc{Shendi University ,Faculte of science and Technologe, Departement of Mathematics ,Shendi Sudan}\\.
  \textit{E-mail address}: \texttt{i\_elkhalil33@yahoo.com}

  \medskip

  Mohamed Vall Ould Moustapha, \textsc{Department of Mathematic,
 College of Arts and Sciences-Gurayat,
 Jouf University-Kingdom of Saudi Arabia }\par\nopagebreak
\textsc{ Faculte des Sciences et Techniques
Universit\'e de  Nouakchott Al-Aasriya.
Nouakchott-Mauritanie}\\
  \textit{E-mail address}: \texttt{mohamedvall.ouldmoustapha230@gmail.com}}}

 \Addresses
\end{document}